\pdfoutput=1
\documentclass[acmsmall,natbib=false,nonacm,screen]{acmart}
\setcopyright{none}
\AtBeginDocument{%
  }

\RequirePackage[
  datamodel=acmdatamodel,
  style=acmauthoryear,
  date=year,
]{biblatex}

\usepackage[appendix=inline,bibengine=biblatex,bibliography=common]{apxproof}

\newtheoremrep{theorem}{Theorem}

\usepackage{subcaption}
\usepackage{enumitem}
\usepackage{mathtools}

\usepackage{xcolor-solarized}
\colorlet{yellow}{solarized-yellow}
\colorlet{orange}{solarized-orange}
\colorlet{red}{solarized-red}
\colorlet{magenta}{solarized-magenta}
\colorlet{violet}{solarized-violet}
\colorlet{blue}{solarized-blue}
\colorlet{cyan}{solarized-cyan}
\colorlet{green}{solarized-green}

\colorlet{nilbg}{solarized-cyan!25}
\colorlet{consbg}{solarized-violet!25}

\colorlet{Appbg}{solarized-blue!25}
\colorlet{itbg}{solarized-magenta!25}

\colorlet{nbg}{solarized-green!25}
\colorlet{cbg}{solarized-red!25}

\colorlet{fbg}{solarized-magenta!25}
\colorlet{fsbg}{solarized-cyan!25}
\colorlet{gbg}{solarized-magenta!25}
\colorlet{hbg}{solarized-magenta!25}

\colorlet{etabg}{solarized-violet!25}

\colorlet{ebg}{solarized-yellow!25}
\colorlet{mbg}{solarized-green!25}

\usepackage{rv-macros}

\usepackage{quiver}
\newcommand{\fsLemma}[1]{\citeauthor{fioreListObjectsAlgebraic2017}~\citeyear{fioreListObjectsAlgebraic2017}, Lemma~#1}

\begin{document}

\title{Symmetric List Objects}

\author{Fiona Blackett}
\orcid{0009-0001-2966-819X}
\email{fiona.blackett@strath.ac.uk}
\affiliation{%
  \institution{University of Strathclyde}
  \country{UK}}

\author{Vikraman Choudhury}
\orcid{0000-0003-2030-8056}
\email{vikraman.choudhury@strath.ac.uk}
\affiliation{%
  \institution{University of Strathclyde}
  \country{UK}}

\author{Rin Liu}
\orcid{0009-0005-8791-2495}
\email{rin.liu@strath.ac.uk}
\affiliation{%
  \institution{University of Strathclyde}
  \country{UK}}

\begin{abstract}
  Cons-lists are fundamental to functional programming, and are well-understood as initial algebras, and as a concrete construction of free monoids. In category theory, such list objects can be axiomatised internal to any monoidal category, with nil and cons maps, and a recursion mechanism. When strengthened to support parametrised recursion, they can be shown to give the free monoid construction. We study a small variant on parametrised list objects called parametrised symmetric list objects, in a symmetric monoidal category, where the two frontmost elements are allowed to be permuted. We show that this gives a construction of free commutative monoids, and develop some related theory.
\end{abstract}

\maketitle

\section{Introduction}
\label{sec:introduction}

Lists are a fundamental data structure, important in computer science and mathematics. They are often defined in functional programming languages as cons-lists, generated inductively from a pair of constructors $\nil:L(A)$ and $\cons:A\to L(A)\to L(A)$, and whose eliminators allow for definitions by recursion and proofs by induction.

In category theory, these lists are understood as initial algebras $\mu X.1+A\times X$~\cite{DBLP:phd/ethos/Hagino87,adamekInitialAlgebrasTerminal2025}. This list structure can be axiomatised more generally in the setting of a monoidal category, replacing the cartesian product with a tensor, and removing the need for a coproduct by axiomatising the $\nil$ and $\cons$ maps directly with an iteration mechanism. These list objects generalise the notion of natural numbers objects~\cite{lawvereFUNCTORIALSEMANTICSALGEBRAIC1963} and were introduced in Joyal's arithmetic universes~\cite[see][]{maiettiJoyalsArithmeticUniverses2003}. An internal language for them was studied by \textcite{maiettiJoyalsArithmeticUniverse2010}, and other variants were explored by \textcite{cockettListarithmeticDistributiveCategories1990} and \textcite{jayFiniteObjectsLocos1997}.

In all of these settings, additional axioms on the underlying category are required to reconstruct traditional list operations, such as list append ($\App$), from these list objects. However, in the minimal setting of a monoidal category, this notion of a list object can be strengthened to that of a \textit{parametrised} list object which supports parametrised recursion. It is shown by \textcite{fioreListObjectsAlgebraic2017} that this yields a construction of free monoids~\cite[see also][]{kellyUnifiedTreatmentTransfinite1980} in monoidal categories. We give a string diagrammatic proof of this result. As a consequence~\cite{fioreListObjectsAlgebraic2017}, if a monoidal category $\cat{C}$ has all parametrised list objects, then the forgetful functor $\mathsf{Mon}(\cat{C})\to\cat{C}$ is monadic.

Finite multisets are another important data structure that are the commutative counterpart to lists, and in fact can be axiomatised with only a simple modification to these list objects. In this paper, we study the notion of a \textit{symmetric} list object in a symmetric monoidal category. We identify lists that differ by a transposition of the two frontmost elements using the symmetry of the underlying category. Using this, we show the analogous commutative results that symmetric list objects are free commutative monoids, and that having all symmetric list objects yields a similar monadic adjunction between $\mathsf{CMon}(\cat{C})$ and $\cat{C}$.

In type theory, finite multisets have been considered as setoids over lists, where the equivalence relation is bag equivalence~\cite{danielssonBagEquivalenceProofRelevant2012,altenkirchDefinableQuotientsType}, or as higher inductive types of lists with an additional swap constructor to permute the two head elements~\cite{choudhuryFreeCommutativeMonoids2023}. These constructions are carried out in type theories with function types, whereas our setting only requires a symmetric monoidal structure without closure.

\section{List objects}
\label{sec:list-objects}

Let $(\cat{C},\otimes,1)$ be a strict monoidal category. By coherence, strictness is no loss of generality, and we freely treat cartesian monoidal categories as strict.

\begin{definition}\label{def:list-object}
  A (\textit{parametrised}) \textit{list object} (hereafter, just \textit{list object}) $L(A)$ on an object $A$ consists of a pair of maps $\nil:1\to L(A)$ and $\cons:A\otimes L(A)\to L(A)$
  such that for all objects $P$ and $L$ equipped with maps $n:P\to L$ and $c:A\otimes L\to L$,
  there exists a unique \textit{mediating map} or \textit{iterator} $\iter(n,c):L(A)\otimes P\to L$
  satisfying computation rules (a) and (b):

  \begin{equation*}

  \end{equation*}
\end{definition}

\begin{toappendix}

  When rewriting a diagram, the affected portions will be outlined, and the results on the following diagram will be outlined with a dashed border. String labels will also be omitted from diagrams in proofs to reduce clutter, with colours indicating the types.

  We define the append operation $\App:L(A)\otimes L(A)\to L(A)$ as the iterator $\iter(\id,\cons)$, and the unit map $\eta:A\to L(A)$ as:
  \begin{equation*}
    %
  \end{equation*}

  We note that $\App$ computes on $\eta$ in the left argument as shown in the following lemma.
      \qedhere
    \end{equation*}
  \end{proof}
\end{toappendix}

\begin{theoremrep}\label{thm:free-monoid}
  The list object $L(A)$ is the free monoid on $A$.
\end{theoremrep}
\begin{proof}
  We claim that $(L(A),\nil,\App)$ is a free monoid structure on $A$.

  The left unit law directly holds by $\App$-$\nil$-$\beta$:
  \begin{equation*}
    %
%
  \end{equation*}

  For the right unit law, consider the following maps $f,g:L(A)\to L(A)$ below:
  \begin{equation*}
    %
  \end{equation*}

  We claim that these maps are both the iterator $\iter(\nil,\cons)$. By the uniqueness of the mediating map, it suffices to show that the two maps satisfy the required computation rules.

  The identity map trivially satisfies both rules, so we have $g=\iter(\nil,\cons)$. We have $f$-$\nil$-$\beta$ by applying $\App$-$\nil$-$\beta$:
  \begin{equation*}
    %
  \end{equation*}

  and hence $f=\iter(\nil,\cons)=g$.

  For associativity, let $f,g:L(A)\otimes L(A)\otimes L(A)\to L(A)$ be the two maps below:
  \begin{equation*}
    %
  \end{equation*}

  We claim that both maps are equal to $\iter(\App,\cons)$. We have $\nil$-$\beta$ for both since $\App$ computes on $\nil$ in the left argument. In detail, for $f$:
  \begin{equation*}
    %
  \end{equation*}

  Similarly, we have $\cons$-$\beta$ since $\App$ also computes on $\cons$. For $f$:
  \begin{equation*}
    %
  \end{equation*}

  and for $g$, we push the $\cons$ through $\App$ with two computations:
  \begin{equation*}
    %
  \end{equation*}

  Hence $f=\iter(\App,\cons)=g$.

  It remains to show the unique extension property.
  Let $f:A\to B$ be a morphism, with $(B,e,m)$ a monoid object.
  We define the extension $f^\sharp:L(A)\to B$ as the mediating map $\iter(n,c)$,
  where $n:1\rightarrow B$ and $c:A\otimes B\rightarrow B$ are:
  \begin{equation*}
    %
  \end{equation*}

  We verify that $f^\sharp\circ\eta=f$:
  \begin{equation*}
    %
  \end{equation*}

  Next, we check that $f^\sharp$ is a monoid homomorphism.

  Since $f^\sharp=\iter(e,\ldots)$, $f^\sharp$ computes on $\nil$ to $e$ and hence preserves the monoid identity. For the homomorphism property, let $g,h:L(A)\otimes L(A)\to B$ be the following maps:
  \begin{equation*}
    %
  \end{equation*}

  We show that these maps are both the iterator $\iter(f^\sharp,c)$, where $c$ is the same list algebra map as in the definition of $f^\sharp$ above. We have $g$-$\nil$-$\beta$ by $\App$-$\nil$-$\beta$, and $h$-$\nil$-$\beta$ by $f^\sharp$-$\nil$-$\beta$ and left unit for $B$:
  \begin{equation*}
    %
  \end{equation*}

  For $\cons$-$\beta$ we have the computations:
  \begin{equation*}
    %
  \end{equation*}

  Finally, we show that the extension $f^\sharp$ is unique. Suppose there is another monoid homomorphism $g:L(A)\to B$ satisfying $g\circ\eta=f$. Then, $g$ satisfies the same computation rules as $f^\sharp$. Indeed, $\nil$-$\beta$ holds since $g$ preserves identities, and $\cons$-$\beta$ holds via the following computation:
  \begin{equation*}
    %
    \qedhere
  \end{equation*}
\end{proof}

\begin{theorem}[\fsLemma{3.4.2}]
  If $\cat{C}$ admits parametrised list objects for every $A$,
  then $\mathsf{Mon}(\cat{C})$ is monadic over $\cat{C}$ via the evident forgetful functor.
\end{theorem}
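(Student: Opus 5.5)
By Theorem~\ref{thm:free-monoid}, each $L(A)$ is the free monoid on $A$. So the forgetful functor $U:\mathsf{Mon}(\cat{C})\to\cat{C}$ has a left adjoint $F$ with $FA=(L(A),\nil,\App)$. The unit is $\eta$, and the counit at a monoid $(B,e,m)$ is $\id_B^\sharp:L(B)\to B$. It then suffices to verify the hypotheses of Beck's monadicity theorem. First, $U$ reflects isomorphisms. Second, $\mathsf{Mon}(\cat{C})$ has coequalisers of $U$-split pairs, and $U$ preserves them.

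Reflecting isomorphisms is routine. If $h:(B,e,m)\to(B',e',m')$ is a monoid homomorphism whose underlying map has an inverse $k$ in $\cat{C}$, then $k$ preserves the unit, since $k e' = k h e = e$. It also preserves multiplication, since
\[
k m' = k m' (hk\otimes hk) = k h m (k\otimes k) = m(k\otimes k).
\]
So $k$ is a monoid homomorphism.

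For the coequaliser condition, take monoid homomorphisms $f,g:(B,e,m)\to(B',e',m')$. Suppose $Uf,Ug$ have a split coequaliser in $\cat{C}$: a map $q:B'\to Q$ with sections $s:Q\to B'$ and $t:B'\to B$ satisfying
\[
qf=qg,\quad qs=\id,\quad ft=\id,\quad gt=sq.
\]
I would put a monoid structure on $Q$ by
\[
e_Q = q e' \quad\text{and}\quad m_Q = q\, m' (s\otimes s).
\]
The key fact needed is that $q m' = m_Q(q\otimes q)$, i.e.\ that $q$ is a homomorphism. The plan for this is the following.
\begin{itemize}
\item Split coequalisers are absolute, so $q\otimes q$ is the coequaliser of $f\otimes f$ and $g\otimes g$. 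It is split by $s\otimes s$ and $t\otimes t$.
\item $q m'$ coequalises this pair, since $q m'(f\otimes f)=q f m = q g m = q m'(g\otimes g)$.
\item So $q m'$ factors uniquely through $q\otimes q$. The factor is $q m'(s\otimes s) = m_Q$.
\end{itemize}
With this fact, the unit and associativity laws for $Q$ transfer from $B'$ by precomposing with the epimorphisms $q$, $q\otimes q$ and $q\otimes q\otimes q$. The last is a split coequaliser of triple tensors, hence epi. The same absoluteness argument shows that $q$ is the coequaliser in $\mathsf{Mon}(\cat{C})$. If a homomorphism $r$ satisfies $rf=rg$, then its factorisation $rs$ through $q$ in $\cat{C}$ is a homomorphism, which is checked again by cancelling $q\otimes q$. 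And $U$ preserves this coequaliser by construction.

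I expect the main obstacle to be the careful verification that $m_Q$ is well-defined and associative. Everything hinges on the tensor powers of the split fork remaining split forks, which holds because split forks are preserved by any functor, including $-\otimes-$ applied componentwise. Alternatively, I could cite the standard result that the forgetful functor from monoids for any monoidal category creates $U$-split coequalisers. Only the existence of the left adjoint is specific to list objects, and that is supplied by Theorem~\ref{thm:free-monoid}.
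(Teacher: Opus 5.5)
Your argument is correct, but it takes a different route from the paper. You apply Beck's monadicity theorem, so the only input specific to list objects is the left adjoint supplied by Theorem~\ref{thm:free-monoid}. The rest is the general fact that $U:\mathsf{Mon}(\cat{C})\to\cat{C}$ creates coequalisers of $U$-split pairs in any monoidal category, because $-\otimes-$ preserves split forks. Your checks of this all hold up: reflection of isomorphisms, $m_Q(q\otimes q)=qm'$, the monoid laws by cancelling the split epimorphisms $q\otimes q$ and $q\otimes q\otimes q$, and the universal property of $q$ among homomorphisms.

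The paper avoids Beck's theorem and inverts the comparison functor $\mathsf{Mon}(\cat{C})\to L$-$\mathsf{Alg}$, $(B,e,m)\mapsto\id_B^\sharp$, by hand. From an algebra $\alpha:L(A)\to A$ it defines $e=\alpha\circ\nil$ and $m=\alpha\circ\cons\circ(A\otimes\eta)$. It shows that $\alpha$ is a split epimorphism preserving unit and multiplication, and transfers the monoid laws along it. It then recovers $\alpha=\iter(e,m)=\id_A^\sharp$ from uniqueness of mediating maps.

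The paper's version is elementary and gives an explicit inverse. However, it depends on list-specific facts, namely how $\App$ interacts with $\eta$ and the iteration principle. Its commutative analogue also needs the extra check that the recovered $m$ satisfies the ilst axiom.

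Your version is more modular. It shows that monadicity follows from the mere existence of free monoids, however they are constructed. It also carries over to $\mathsf{CMon}(\cat{C})$ with a single further cancellation of $q\otimes q$, using naturality of the symmetry, to see that $m_Q$ is commutative.
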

\begin{proof}[Proof sketch]
  By Theorem~\ref{thm:free-monoid}, $A\mapsto L(A)$ is left adjoint to the forgetful functor, giving a monad $L$ with unit $\eta$, action on maps $L(f)=(\eta\circ f)^\sharp$, and multiplication $\mu_A=\id_{L(A)}^\sharp:LL(A)\to L(A)$. We show that the comparison functor $\mathsf{Mon}(\cat{C})\to L$-$\mathsf{Alg}$, sending $(B,e,m)$ to $\id_B^\sharp:L(B)\to B$, is an equivalence. The unit law holds by construction of the extension, as $\id_B^\sharp\circ\eta=\id_B$. Both $\id_B^\sharp\circ\mu_B$ and $\id_B^\sharp\circ L(\id_B^\sharp)$ are monoid homomorphisms $LL(B)\to B$ extending $\id_B^\sharp$, so the multiplication law holds by uniqueness of extensions.

  Conversely, given an algebra $\alpha:L(A)\to A$, let $e=\alpha\circ\nil$ and $m=\alpha\circ\cons\circ(A\otimes\eta)$, which by Lemma~\ref{lem:eta-app} is $\alpha\circ\App\circ(\eta\otimes\eta)$. Since $\mu_A$ is a monoid homomorphism with $\mu_A\circ\eta_{L(A)}=\id$, we have $\App_A=\mu_A\circ\App_{L(A)}\circ(\eta_{L(A)}\otimes\eta_{L(A)})$, so the algebra law $\alpha\circ\mu_A=\alpha\circ L(\alpha)$, the fact that $L(\alpha)$ is a monoid homomorphism, and naturality of $\eta$ give $\alpha\circ\App_A=m\circ(\alpha\otimes\alpha)$, so $\alpha$ is a split epimorphism (split by $\eta$) that preserves the unit and multiplication, and the monoid laws for $(A,e,m)$ follow from those of $(L(A),\nil,\App)$. In particular, $\alpha\circ\cons=m\circ(A\otimes\alpha)$, so $\alpha$ satisfies the computation rules of $\iter(e,m)=\id_A^\sharp$, and $\alpha=\id_A^\sharp$ by uniqueness of the mediating map. Starting from a monoid, $\id_B^\sharp\circ\nil=e$ and $\id_B^\sharp\circ\cons\circ(B\otimes\eta)=m\circ(B\otimes\id_B^\sharp\circ\eta)=m$, so the two constructions are mutually inverse. Finally, a map is a monoid homomorphism if and only if it is an algebra homomorphism, again by uniqueness of extensions.
\end{proof}

\begin{theoremrep}\label{thm:strong-up}
  If $(\cat{C},\times,1)$ is cartesian monoidal and $L(A)$ is a list object, then $L(A)$ satisfies the following stronger universal property: for all objects $P$ and $L$ equipped with maps $n:P\to L$ and $c:A\times L\times P\rightarrow L$, there exists a unique \textit{doubly parametrised iterator} $\checkiter(n,c):L(A)\times P\rightarrow L$, such that the following two computation rules hold:
  \begin{equation*}
    \begin{tikzpicture}[baseline=(it)]
      \begin{scope}
        \node[operator,fill=itbg] (it) at (0,0) {$\checkiter(n,c)$};
        \node[anchor=east,operator,fill=nilbg] (in1) at ($(it.140)+(-1,0.4)$) {$\nil$};
        \node (in2) at ($(it.220)+(-1,-0.4)$) {};
        \node (out) at ($(it.east)+(1,0)$) {};
        \draw[blue] (in1) to[out=0,in=140] node[inputwire,pos=0.4] {$L(A)$} (it.140);
        \draw[green] ($(in2.west) - (0.25,0)$) to[out=0,in=220] node[inputwire,below] {$P$} (it.220);
        \draw[cyan] (it) -- (out.west) node[outputwire] {$L$};
      \end{scope}

      \node at (2,0) {$=$};
      \node at (2,-2) {\textup{(a) }$\nil$-$\beta$};

      \begin{scope}[xshift=3.65cm]
        \node[operator,fill=nbg] (n) at (0,0) {$n$};
        \node (in) at ($(n.west)+(-1,0)$) {};
        \node (out) at ($(n.east)+(1,0)$) {};
        \draw[green] (in) -- node[inputwire] {$P$} (n);
        \draw[cyan] (n) -- node[outputwire] {$L$} (out);
      \end{scope}
    \end{tikzpicture}%
    \quad
    \begin{tikzpicture}[baseline=(base)]
      \begin{scope}
        \node (base) at (0,0) {};
        \node[operator,fill=itbg] (it) at (0,0) {$\checkiter(n,c)$};
        \node[anchor=east,operator,fill=consbg] (cons) at ($(it.140)+(-1,0.4)$) {$\cons$};

        \coordinate (in1) at ($(cons.140)+(-1,0.2)$);
        \coordinate (in2) at ($(cons.220)+(-1,-0.2)$);
        \coordinate (in3) at ($(in1 |- it.220)+(0,-0.6)$);
        \coordinate (out) at ($(it.east)+(1,0)$);

        \draw[blue] (cons) to[out=0,in=140] node[inputwire,pos=0.4] {$L(A)$} (it.140);
        \draw[green] (in3) to[out=0,in=220] node[inputwire,pos=0.1,below] {$P$} (it.220);

        \draw[cyan] (it) -- (out.west) node[outputwire] {$L$};
        \draw[magenta] (in1) to[out=0,in=140] node[inputwire] {$A$} (cons.140);
        \draw[blue] (in2) to[out=0,in=220] node[inputwire,below] {$L(A)$} (cons.220);
      \end{scope}

      \node at (2,0) {$=$};
      \node at (2,-2) {\textup{(b) }$\cons$-$\beta$};

      \begin{scope}[xshift=6.2cm]
        \node[operator,fill=cbg] (c) at (0,0) {$c$};
        \node[anchor=east,operator,fill=itbg] (it) at ($(c.220)+(-1,-0.4)$) {$\checkiter(n,c)$};

        \coordinate (in2) at ($(it.140)+(-1.5,0.2)$);
        \coordinate (in1) at ($(in2 |- c.140)+(0,0.8)$);
        \coordinate (in3) at ($(in1 |- it.220)+(0,-0.2)$);
        \node[circle,inner sep=1pt,fill=black] (anchor) at ($(in3)+(1,0)$) {};
        \coordinate (out) at ($(c.east)+(1,0)$);

        \draw[magenta] (in1) to[out=0,in=140] node[inputwire,pos=0.1] {$A$} (c.140);
        \draw[blue] (in2) to[out=0,in=140,in looseness=0.4] node[inputwire] {$L(A)$} (it.140);
        \draw[green] (in3) to[out=0,in=180] node[inputwire,below] {$P$} (anchor);
        \draw[green] (anchor) to[out=340,in=220,out looseness=1.5] (c.220);
        \draw[green] (anchor) to[out=40,in=220] (it.220);
        \draw[cyan] (it) to[out=0,in=180,in looseness=1.2] node[inputwire,pos=0.4,xshift=-2pt] {$L$} (c.180);

        \draw[cyan] (c) -- (out) node[outputwire] {$L$};
      \end{scope}
    \end{tikzpicture}%
  \end{equation*}
\end{theoremrep}

\begin{proof}[Proof sketch]
  Consider the list algebra structure on $K=L\times P$ given by $\check{n}:P\xrightarrow{\Delta_P}P\times P\xrightarrow{n\times P}L\times P$ and $\check{c}:A\times L\times P\xrightarrow{A\times L\times\Delta_P}A\times L\times P\times P\xrightarrow{c\times P}L\times P$. By the ordinary universal property, we have an iterator $\iter(\check{n},\check{c}):L(A)\times P\to L\times P$. Then, $\checkiter(n,c)$ is given by $\pi_1\circ\iter(\check{n},\check{c})$. Its computation rules follow from those of $\iter(\check{n},\check{c})$, using that $\pi_2\circ\iter(\check{n},\check{c})=\pi_2$, which holds since both sides are the iterator $\iter(\id_P,\pi_2)$. For uniqueness, if $h$ satisfies the computation rules, then $\langle h,\pi_2\rangle$ satisfies those of $\iter(\check{n},\check{c})$, so $h=\pi_1\circ\iter(\check{n},\check{c})$.
\end{proof}

\begin{theoremrep}[\fsLemma{3.4.3}]
  If $(\cat{C},\times,1)$ is cartesian monoidal and admits list objects for every $A$, then the induced monad $L:\cat{C}\to\cat{C}$ is strong.
\end{theoremrep}

\begin{proof}[Proof sketch]
  Since the category is symmetric monoidal, we need only define one of the strengths. A construction of the right strength $\tau_{A,B}:L(A)\times B\to L(A\times B)$ is as follows; the left strength $\sigma_{A,B}:A\times L(B)\to L(A\times B)$ is then obtained by conjugating $\tau_{B,A}$ with the symmetry.

  We can equip $L(A\times B)$ with a doubly parametrised list algebra structure with the maps
  \begin{align*}
    n & \coloneqq B\xrightarrow{!}1\xrightarrow{\nil}L(A\times B)                                                              \\
    c & \coloneqq A\times L(A\times B)\times B\xrightarrow{\cong} A\times B\times L(A\times B)\xrightarrow{\cons} L(A\times B)
  \end{align*}
  and from the strong universal property (Theorem~\ref{thm:strong-up}), we obtain a map $\checkiter(n,c):L(A)\times B\rightarrow L(A\times B)$. This iterator is the right strength map.

  The required strength coherences then follow from the uniqueness part of the strong universal property, by checking that both sides of each coherence satisfy the same computation rules.
\end{proof}

\section{Symmetric List Objects}

Hereafter, we assume $\cat{C}$ is symmetric strict monoidal, i.e. strict monoidal with a symmetric braiding.

\begin{definition}
  A (\textit{parametrised}) \textit{symmetric list object} or \textit{ilst} $L(A)$ on an object $A$ consists of a pair of maps $\nil:1\to L(A)$ and $\cons:A\otimes L(A)\to L(A)$ satisfying the following \textit{ilst axiom}:
  \begin{equation*}
    \begin{tikzpicture}
      \begin{scope}
        \node[operator,fill=consbg] (cons2) at (0,0) {$\cons$};
        \node[operator,fill=consbg] (cons1) at ($(cons2.220)+(-1.3,-0.4)$) {$\cons$};

        \coordinate (in2) at ($(cons1.140)+(-1,0.2)$);
        \coordinate (in1) at ($(in2 |- cons2.140)+(0,0.6)$);
        \coordinate (in3) at ($(in1 |- cons1.220)+(0,-0.2)$);
        \coordinate (out) at ($(cons2.east)+(1,0)$);

        \draw[magenta] (in1) to[out=0,in=140] node[inputwire,pos=0.05] {$A$} (cons2.140);
        \draw[magenta] (in2) to[out=0,in=140] node[inputwire,pos=0.15] {$A$} (cons1.140);
        \draw[blue] (in3) to[out=0,in=220] node[inputwire,below] {$L(A)$} (cons1.220);
        \draw[blue] (cons1) to[out=0,in=220] node[below,pos=0.4] {$L(A)$} (cons2.220);
        \draw[blue] (cons2) -- node[outputwire] {$L(A)$} (out);
      \end{scope}

      \node at (2,0) {$=$};
      \node at (2,-2) {ilst axiom};

      \begin{scope}[xshift=5.5cm]
        \node[operator,fill=consbg] (cons2) at (0,0) {$\cons$};
        \node[operator,fill=consbg] (cons1) at ($(cons2.220)+(-1.3,-0.4)$) {$\cons$};

        \coordinate (in2) at ($(cons1.140)+(-1,0.2)$);
        \coordinate (in1) at ($(in2 |- cons2.140)+(0,0.6)$);
        \coordinate (in3) at ($(in1 |- cons1.220)+(0,-0.2)$);
        \coordinate (out) at ($(cons2.east)+(1,0)$);

        \draw[magenta] (in1) to[out=0,in=140] node[inputwire,pos=0.1] {$A$} (cons1.140);
        \draw[magenta] (in2) to[out=0,in=140] node[inputwire,pos=0.05] {$A$} (cons2.140);
        \draw[blue] (in3) to[out=0,in=220] node[inputwire,below] {$L(A)$} (cons1.220);
        \draw[blue] (cons1) to[out=0,in=220] node[below,pos=0.4] {$L(A)$} (cons2.220);
        \draw[blue] (cons2) -- node[outputwire] {$L(A)$} (out);
      \end{scope}
    \end{tikzpicture}%
  \end{equation*}
  such that for all objects $P$ and $L$ equipped with maps $n:P\to L$ and $c:A\otimes L\to L$, where $c$ also satisfies the ilst axiom (with $L$ in place of $L(A)$), there exists a unique iterator $\iter(n,c):L(A)\otimes P\to L$ satisfying the computation rules (a) and (b) of Definition~\ref{def:list-object}.
\end{definition}

Since the universal property only quantifies over list algebras satisfying the ilst axiom, a symmetric list object need not be a list object: for instance, in $\mathbf{Set}$, finite multisets on a set $A$ form a symmetric list object, which is not a list object when $A$ has at least two elements. Conversely, every list object satisfying the ilst axiom is a symmetric list object. For example, the list object $L(1)$, a natural numbers object, is a symmetric list object on $1$, since the symmetry on $1\otimes1$ is the identity. As $\cons$ satisfies the ilst axiom, $\App=\iter(\id,\cons)$ and $\eta$ are also defined for symmetric list objects, and Lemma~\ref{lem:eta-app} holds with the same proof.

\begin{toappendix}

  \begin{lemma}\label{lem:eta-app-braided}
    For any symmetric list object $L(A)$:
    \begin{equation*}

    \end{equation*}
  \end{lemma}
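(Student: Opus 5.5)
The diagram in the statement did not survive in the source above, so I reconstruct it from the label and from where the lemma sits. I read Lemma~\ref{lem:eta-app-braided} as the symmetric counterpart of Lemma~\ref{lem:eta-app}. Lemma~\ref{lem:eta-app} says that $\App$ computes on $\eta$ in the \emph{left} argument: $\App\circ(\eta\otimes L(A))=\cons$. The new lemma should say that appending a singleton on the \emph{right} also gives $\cons$, up to the braiding:
\[
\App\circ(L(A)\otimes\eta)=\cons\circ\sigma_{L(A),A}:L(A)\otimes A\to L(A).
\]
Here $\sigma$ is the symmetry. This cannot hold for ordinary list objects, which is why the ilst axiom must enter.

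The plan is a uniqueness-of-iterator argument with parameter object $P=A$ and target $L=L(A)$. As list algebra data I take $n=\eta:A\to L(A)$ and $c=\cons:A\otimes L(A)\to L(A)$. Since $\cons$ satisfies the ilst axiom by definition of a symmetric list object, the universal property applies, and there is a unique $\iter(\eta,\cons):L(A)\otimes A\to L(A)$. I then show that both
\[
f=\App\circ(L(A)\otimes\eta)\qquad\text{and}\qquad g=\cons\circ\sigma_{L(A),A}
\]
satisfy rules (a) and (b), and conclude $f=g$.

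For $f$, rule (a) follows from $\App$-$\nil$-$\beta$, since $f\circ(\nil\otimes A)=\App\circ(\nil\otimes\eta)=\eta$. Rule (b) follows from $\App$-$\cons$-$\beta$. The $\cons$ passes out of the left argument of $\App$, leaving $\cons\circ(A\otimes f)$, with the $\eta$ on the parameter wire untouched.

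For $g$, rule (a) is immediate from naturality of the symmetry: $g\circ(\nil\otimes A)=\cons\circ(A\otimes\nil)$, which is the definition of $\eta$. Rule (b) is the key step. We have $g\circ(\cons\otimes A)=\cons\circ(A\otimes\cons)$ precomposed with a braiding that moves the parameter $A$ to the front. The ilst axiom swaps the two frontmost $A$ wires, so the parameter becomes the inner cons and the list head becomes the outer cons. By naturality and the hexagon or coherence of the symmetry, the result is $\cons\circ(A\otimes g)$.

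I expect the only real obstacle to be the braiding bookkeeping in that last step. One has to check that the composite of the two braidings, after applying the ilst axiom, is exactly $A\otimes\sigma_{L(A),A}$. This follows from coherence for symmetric monoidal categories. If the intended statement is a different braided variant instead, for example $\App\circ\sigma\circ(\eta\otimes L(A))=\cons$, the same argument applies verbatim after precomposing with $\sigma$.
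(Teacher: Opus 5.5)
Your proposal is correct and follows the paper's own proof. The paper likewise shows that both $\App\circ(L(A)\otimes\eta)$ and $\cons$ precomposed with the symmetry $L(A)\otimes A\to A\otimes L(A)$ satisfy the computation rules of the iterator $\iter(\eta,\cons):L(A)\otimes A\to L(A)$, which exists because $\cons$ satisfies the ilst axiom, and it concludes by uniqueness; your reconstruction of the stripped statement fits both this choice of iterator and the lemma's later use in proving that $\App$ is commutative.
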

  \begin{proof}
    Let the two maps be $f$ and $g$, respectively. We claim that both maps are the iterator $\iter(\eta,\cons)$.

    $\nil$-$\beta$:
    \begin{equation*}
      %
      \qedhere
    \end{equation*}
  \end{proof}

\end{toappendix}

\begin{theoremrep}
  Let $(L(A),\nil,\cons)$ be a list object on an object $A$. Then, the following are equivalent:
  \begin{enumerate}
    \item $(L(A),\nil,\cons)$ is a symmetric list object;
    \item $(L(A),\nil,\App)$ is a commutative monoid object;
    \item $(L(A),\nil,\App,\eta)$ is the free commutative monoid object on $A$.
  \end{enumerate}
\end{theoremrep}

\begin{proof}
  $(1\Rightarrow2)$ We have already shown that list objects are monoids, so we need only show that $\App$ is commutative:
  \begin{equation*}
    %
  \end{equation*}

  Let the two maps above be $f$ and $g$ respectively. By definition, $f=\App=\iter(\id,\cons)$, so we show that the braided version satisfies the same computation rules, using the right unit law from Theorem~\ref{thm:free-monoid} and Lemma~\ref{lem:eta-app-braided}:

  $g$-$\nil$-$\beta$:
  \begin{equation*}
    %
  \end{equation*}

  $(2\Rightarrow1)$ $L(A)$ is a list object by assumption, so it has iterators for all list algebras, in particular those satisfying the ilst axiom, and it suffices to show the ilst axiom only:

  \begin{equation*}
    %
  \end{equation*}

  This establishes the equivalence of $(1)$ and $(2)$.

  $(3\Rightarrow 2)$ is clear, and since $(1)$ and $(2)$ are equivalent, it remains to show $(1\wedge2\Rightarrow3)$.

  $(1\wedge2\Rightarrow 3)$ $(L(A),\nil,\App)$ is a commutative monoid object by assumption, so we need only show freeness. Let $f:A\to B$ be a morphism, with $(B,e,m)$ a commutative monoid object. Since monoid homomorphisms are automatically commutative monoid homomorphisms, we may construct the extension $f^\sharp$ as $\iter(n,c)$, where $n$ and $c$ are:
  \begin{equation*}
    %
  \end{equation*}

  as in the non-commutative case. Since $L(A)$ is a list object, the rest of the proof of Theorem~\ref{thm:free-monoid} applies verbatim: $f^\sharp\circ\eta=f$, the homomorphism property via $\iter(f^\sharp,c)$, and uniqueness only iterate over list algebras with this same $c$. For the remark below, we also check that $c$ satisfies the ilst axiom:

  \begin{equation*}
    %
    \qedhere
  \end{equation*}
\end{proof}

Note the proofs of $(1\Rightarrow2)$ and $(1\wedge2\Rightarrow3)$ do not use that $L(A)$ is a list object: for a symmetric list object, Lemmas~\ref{lem:eta-app} and~\ref{lem:eta-app-braided}, and the monoid laws and freeness argument in the proof of Theorem~\ref{thm:free-monoid}, carry over unchanged, since every list algebra they iterate over satisfies the ilst axiom (for freeness, by the computation above). Hence every symmetric list object is the free commutative monoid on $A$.

\begin{theorem}
  If $\cat{C}$ admits parametrised symmetric list objects for every $A$,
  then $\mathsf{CMon}(\cat{C})$ is monadic over $\cat{C}$ via the evident forgetful functor.
\end{theorem}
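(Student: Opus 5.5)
We follow the proof sketch of the non-commutative monadicity theorem (\fsLemma{3.4.2}), replacing Theorem~\ref{thm:free-monoid} by the remark after the previous theorem: every symmetric list object is the free commutative monoid on $A$. This gives a left adjoint $A\mapsto L(A)$ to the forgetful functor $\mathsf{CMon}(\cat{C})\to\cat{C}$. The induced monad has unit $\eta$, action $L(f)=(\eta\circ f)^\sharp$ and multiplication $\mu_A=\id_{L(A)}^\sharp$. The extensions are taken in $\mathsf{CMon}(\cat{C})$, so they are unique among commutative monoid homomorphisms. Since commutative monoid homomorphisms are exactly monoid homomorphisms between commutative monoids, every uniqueness-of-extension argument from the earlier proof transfers.

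We show the comparison functor $\mathsf{CMon}(\cat{C})\to L$-$\mathsf{Alg}$, sending $(B,e,m)$ to $\id_B^\sharp$, is an equivalence, indeed an isomorphism of categories.
\begin{enumerate}
  \item \emph{Algebra laws for $\id_B^\sharp$.} The unit law holds by construction. For the multiplication law, $\id_B^\sharp\circ\mu_B$ and $\id_B^\sharp\circ L(\id_B^\sharp)$ are both commutative monoid homomorphisms $LL(B)\to B$ extending $\id_B^\sharp$, so they agree by uniqueness.
  \item \emph{From an algebra to a monoid.} Given $\alpha:L(A)\to A$, set $e=\alpha\circ\nil$ and $m=\alpha\circ\cons\circ(A\otimes\eta)$. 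The earlier argument shows $\alpha\circ\App=m\circ(\alpha\otimes\alpha)$, using the algebra law, that $L(\alpha)$ is a homomorphism, and naturality of $\eta$. It only uses Lemma~\ref{lem:eta-app}, which holds for symmetric list objects. So $\alpha$ is a split epimorphism, split by $\eta$, preserving unit and multiplication.
  \item \emph{Transport of the laws.} The monoid laws transfer from $(L(A),\nil,\App)$, and commutativity does too: $L(A)$ is commutative by the $(1\Rightarrow2)$ argument, valid for symmetric list objects. Explicitly,
  \[m\circ\mathrm{sym}=m\circ(\alpha\otimes\alpha)\circ(\eta\otimes\eta)\circ\mathrm{sym}=\alpha\circ\App\circ\mathrm{sym}\circ(\eta\otimes\eta)=\alpha\circ\App\circ(\eta\otimes\eta)=m.\]
  \item \emph{Mutual inverses.} The algebra $\alpha$ satisfies the computation rules of $\iter(e,m)=\id_A^\sharp$, since $\alpha\circ\cons=m\circ(A\otimes\alpha)$. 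Hence $\alpha=\id_A^\sharp$, and the converse direction is the same calculation as before.
  \item \emph{Morphisms.} Commutative monoid homomorphisms coincide with algebra homomorphisms, again by uniqueness of extensions.
\end{enumerate}

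The main obstacle is step 4, the uniqueness of the mediating map. The symmetric universal property only quantifies over algebras whose $c$ satisfies the ilst axiom, so we must check that $m$, as a map $A\otimes A\to A$, satisfies it. This is where commutativity and associativity of $m$ from step 3 enter. The ilst axiom for $m$ reads $m\circ(A\otimes m)=m\circ(A\otimes m)\circ(\mathrm{sym}\otimes A)$. It follows from associativity plus commutativity, exactly as in the computation at the end of the previous proof that $c$ satisfies the ilst axiom. With that check made, uniqueness applies and the argument closes.
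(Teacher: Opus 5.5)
Your proposal is correct and takes the same approach as the paper. You transfer the $\mathsf{Mon}(\cat{C})$ monadicity argument and isolate the same single extra step: the monoid $(A,e,m)$ obtained from an algebra is commutative because $m=\alpha\circ\App\circ(\eta\otimes\eta)$ and $\App$ is commutative, so $m$ satisfies the ilst axiom, which is what allows $\alpha$ to be identified with $\iter(e,m)$.
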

\begin{proof}[Proof sketch]
  By the above, $A\mapsto L(A)$ is left adjoint to the forgetful functor $\mathsf{CMon}(\cat{C})\to\cat{C}$, and the proof for $\mathsf{Mon}(\cat{C})$ carries over, with one additional step. The monoid $(A,e,m)$ obtained from an algebra $\alpha:L(A)\to A$ is commutative, since $m=\alpha\circ\App\circ(\eta\otimes\eta)$ and $\App$ is commutative. Hence $m$ satisfies the ilst axiom, which is needed to identify $\alpha$ with the iterator $\iter(e,m)$.
\end{proof}

\begin{theoremrep}
  If $(\cat{C},\times,1)$ is cartesian monoidal and admits symmetric list objects for every $A$, then the induced monad $L:\cat{C}\to\cat{C}$ is strong, and further, commutative.
\end{theoremrep}
\begin{proof}[Proof sketch]
  The strong universal property (Theorem~\ref{thm:strong-up}) holds for symmetric list objects when restricted to maps $c$ satisfying $c(a,c(b,x,p),p)=c(b,c(a,x,p),p)$, by the same proof, since then $\check{c}$ satisfies the ilst axiom, as does $\pi_2:A\times P\to P$. The map $c$ defining $\tau$ satisfies this condition by the ilst axiom for $L(A\times B)$, so the construction of the strength maps is the same as in the non-symmetric case. The algebras used for the strength coherences are built from $\cons$ and $\App$, and satisfy the condition since $\App$ is commutative and associative. In this case, the strength also satisfies the following additional commutativity coherence:
  \begin{equation*}
    \tikzset{every picture/.style={}}
    \begin{tikzcd}[ampersand replacement=\&]
      \& {L(A)\times L(B)} \& \\
      {L\bigl(L(A)\times B\bigr)} \&\& {L\bigl(A\times L(B)\bigr)} \\
      {LL(A\times B)} \&\& {LL(A\times B)} \\
      \& {L(A\times B)}
      \arrow["{\sigma_{L(A),B}}"', from=1-2, to=2-1]
      \arrow["{\tau_{A,L(B)}}", from=1-2, to=2-3]
      \arrow["{L(\tau_{A,B})}"', from=2-1, to=3-1]
      \arrow["{\tau_{A,B}^\sharp}", from=2-1, to=4-2]
      \arrow["{L(\sigma_{A,B})}", from=2-3, to=3-3]
      \arrow["{\sigma_{A,B}^\sharp}"', from=2-3, to=4-2]
      \arrow["{\mu_{A\times B}}"', from=3-1, to=4-2]
      \arrow["{\mu_{A\times B}}", from=3-3, to=4-2]
    \end{tikzcd}
  \end{equation*}

  The lower pairs of edges compute to $\tau$ and $\sigma$ when precomposed with $\eta_{L(A)\times B}$ and $\eta_{A\times L(B)}$, respectively, and are thus extensions of the strength maps by the universal property of $(-)^\sharp$. It remains to show that the two sides of the upper diamond agree. Both are the doubly parametrised iterator $\checkiter(n,c):L(A)\times L(B)\to L(A\times B)$, with $n$ constantly $\nil$ and $c(a,l,ys)=\sigma_{A,B}(a,ys)\mathbin{\App}l$, which satisfies the restricted condition since $\App$ is commutative and associative. The path through $\tau_{A,L(B)}$ satisfies the computation rules directly, while for the path through $\sigma_{L(A),B}$, the $\cons$-$\beta$ rule requires an auxiliary induction over $L(B)$, again using that $\App$ is commutative and associative.
\end{proof}

Symmetric lists give an alternative way of constructing free commutative monoids in a symmetric monoidal category which is relevant to the study of the exponential modality in linear logic. In the future, we hope to compare this to other approaches, such as that of \textcite{melliesExplicitFormulaFree2018}.

\clearpage
\printbibliography

\end{document}